\newtheorem{thm}{Theorem}[section]
\newtheorem{cor}[thm]{Corollary}
\newtheorem{lem}[thm]{Lemma}
\theoremstyle{definition}
\newtheorem{defn}[thm]{Definition}
\newtheorem{rem}[thm]{Remark}
\newcommand{\C}{\ensuremath{\mathbb{C}}}
\newcommand{\E}{\ensuremath{\mathrm{e}}}
\newcommand{\N}{\ensuremath{\mathbb{N}}}
\newcommand{\R}{\ensuremath{\mathbb{R}}}
\newcommand{\Z}{\ensuremath{\mathbb{Z}}}
\newcommand{\bfb}{\ensuremath{\mathbf b}}
\newcommand{\bfc}{\ensuremath{\mathbf c}}
\newcommand{\bfg}{\ensuremath{\mathbf g}}
\newcommand{\bfk}{\ensuremath{\mathbf k}}
\newcommand{\bfx}{\ensuremath{\mathbf x}}
\newcommand{\bfy}{\ensuremath{\mathbf y}}
\newcommand{\bfzero}{\ensuremath{\mathbf 0}}
\newcommand{\myone}{\mathbf{1}}
\newcommand{\sprod}{\prod_{i=1}^s}
\newcommand{\Is}{[0,1)^s}
\newcommand{\etk}{Erd\"os-Tur\'an-Koksma}
\begin{document}
% --------------

\title
[Hybrid inequality]{A hybrid inequality of Erd\"os-Tur\'an-Koksma for digital sequences}

\author{PETER HELLEKALEK}
\address{Peter Hellekalek,
Dept. of Mathematics,
University of Salzburg,
Hellbrunnerstrasse 34,
5020 Salzburg\\
AUSTRIA}
\email{peter.hellekalek@sbg.ac.at}

\date{\today}

\begin{abstract}
For bases $\mathbf{b}=(b_1, \ldots, b_s)$ of $s$ not necessarily
distinct integers $b_i\ge 2$,
we prove a version of the inequality of \etk \ for the hybrid function system
composed of the Walsh functions in base $\bfb^{(1)}=(b_1, \ldots, b_{s_1})$
and, as  second component,
the $\bfb^{(2)}$-adic functions, $\bfb^{(2)}=(b_{s_1+1}, \ldots, b_s)$,
with $s=s_1+s_2$, $s_1$ and $s_2$ not both equal to 0.
Further,
we point out why this choice of a hybrid function system covers all possible cases of sequences
that employ addition of digit vectors as their main construction principle.
\end{abstract}

\maketitle

%% Classification and key words:

\renewcommand{\thefootnote}{}

\footnote{2010 \emph{Mathematics Subject Classification}: Primary 11K06;
Secondary 11K31, 11K41, 11K70, 11L03.}

\footnote{\emph{Key words and phrases}: uniform distribution of sequences, b-adic method,
b-adic integers, b-adic function systems, Halton sequence, hybrid sequences.}

\renewcommand{\thefootnote}{\arabic{footnote}}
\setcounter{footnote}{0}

\section{Introduction}
% --------------------
\label{s:intro}
In this paper we exhibit a version of the inequality of \etk\
tailored to hybrid digital sequences.
Our result may be seen as a complement to a recent theorem of Niederreiter\cite[Theorem 1]{Nie10a}.

{\em Hybrid sequences} are sequences of points in the multidimensional unit cube $[0,1)^s$
where certain coordinates of the points stem from one lower-dimensional sequence
and  the remaining coordinates from a second lower-dimensional sequence.
%This idea of ``mixing'' two sequences to obtain a new sequence in
%higher dimensions may be extended easily to more than two components.

The analysis of the uniformity of hybrid sequences requires new tools.
The classical results of the theory of uniform distribution of sequences have to be
adapted to the fact that, in a hybrid sequence,
several types of arithmetic may be involved.
For example, the existing versions of the inequality of \etk\ are unsuited to
accommodate for hybrid sequences because only one type of exponential sum
is involved in each version
(see \cite[Ch. 3.2]{Nie92a}, \cite{Hel93a,Hel09a} for details).
The first hybrid version of the inequality of \etk\ was established in Niederreiter\cite{Nie10a}
and has found numerous applications (see \cite{Nie09a,Nie10a,Nie10b,Nie11a,Nie11b}).
For hybrid versions of the Weyl criterion and of diaphony we refer the reader to \cite{Hel10c,Hel11a}.

{\em Digital sequences} in $[0,1)^s$ are constructed from the representation of real numbers
or integers in  given integer  bases. %$b_1, b_2, \ldots, b_s$.
There exist two types of such sequences,
sequences  using addition without carry of digit vectors
and sequences that employ addition with carry of such vectors in their construction method.
Important examples of the first type are digital  nets,
see Niederreiter  \cite[Ch. 4]{Nie92a} and Dick and Pillichshammer~\cite{Dick10a}.
For the second type,
an important example are the Halton sequences,
which are generated by addition with carry.
The underlying group is the compact abelian group of $\bfb$-adic integers
(see \cite[p.29]{Nie92a} and \cite[Cor. 2.18]{Hel11a}).

Our results  come within the framework of a series of studies initiated in
\cite{Hel09a,Hel10a,Hel10c,Hel11a} on the {\em $b$-adic method} in the theory
of uniform distribution of sequences.
In this method,
we employ structural properties of the compact group of $b$-adic integers
as well as $b$-adic arithmetic to derive tools for the analysis of
sequences  in the $s$-dimensional unit cube $[0,1)^s$.

The reason why we have selected  Walsh functions and  $b$-adic functions
in our version of the \etk \ inequality is based on the fact that there exist only two basic types
of addition for digit vectors,
addition with and without carry (see \cite{Hel12c} and Section \ref{s:addition} below).
As a consequence,
when it comes to analyze sequences based on arithmetic with digit vectors,
these two function systems suffice.

In addition to the hybrid \etk\ inequality in Theorem \ref{thm:hybridETK},
we extend techniques of  \cite{Hel09a,Hel10a} from the case of prime bases to
arbitrary bases $\bfb=(b_1, \ldots, b_s)$ consisting of
not necessarily distinct integers $b_i\ge 2$ (see Section~\ref{s:badic}).

\section{Preliminaries}
\label{s:badic}
%$\Gamma_\mathbf{b}^{(s)}$}
% ---------------------------------------------------------------------------
Throughout this paper,
$b$ denotes a positive integer, $b\ge 2$,
and $\mathbf{b}=(b_1, \ldots, b_s)$ stands for a vector of not necessarily distinct
integers $b_i\ge 2$, $1\le i\le s$.
$\mathbb{N}$ represents the positive integers,
and we put $\mathbb{N}_0=\mathbb{N}\cup \{0\}.$

The underlying space is the $s$-dimensional torus
$\mathbb{R}^s/\mathbb{Z}^s$,
which will be identified with the half-open interval $[0,1)^s$.
Haar measure on the $s$-torus $[0,1)^s$ will be denoted by $\lambda_s$.
We put $e(y)=\E^{2\pi \mathrm{i} y}$ for $y\in\mathbb{R}$,
where $\mathrm{i}$ is the imaginary unit.

We will use the standard convention that empty sums have the value 0 and empty products
value 1.

For a nonnegative integer $k$, let
$k =\sum_{j\ge0} k_j\, b^j, k_j \in \{0,1,\ldots ,b-1\},$
be the unique $b$-adic representation of $k$ in base $b$. With the
exception of at most finitely many indices $j$, the digits $k_j$ are
equal to 0.

Every real number $x\in [0,1)$ has a $b$-adic representation
of the form
$x = \sum_{j\ge 0} x_j \,b^{-j-1},$
with digits
$x_j\in \{0,1, \ldots, b-1\}.$
If $x$ is a {\em $b$-adic rational},
which means that $x=ab^{-g}$, $a$ and $g$ integers, $0\le a< b^g$,
$g\in \mathbb{N}$,
and if $x\ne 0$,
then there exist two such representations.
%one of them with the property that $x_j=0$ for all $j$ sufficiently large,
%the other one with $x_j=p-1$ for all $j$ sufficiently large.

The $b$-adic representation of $x$ is uniquely determined
under the condition that $x_j \ne b-1$ for infinitely many $j$.
In the following,
we will call this particular representation the {\em regular} ($b$-adic) representation
of $x$.

Let $\mathbb{Z}_b$ denote the compact group of the $b$-adic integers. We refer
the reader to Hewitt and Ross \cite{Hewitt79a} and Mahler \cite{Mahler81a} for details.
An element $z$ of $\mathbb{Z}_b$ will be written in the form
$z=\sum_{j\ge 0} z_j\, b^j,$
with digits $z_j\in \{0,1,\ldots, b-1  \}$.
The set $\mathbb{Z}$ of integers  is embedded in $\mathbb{Z}_b$.
If $z\in \mathbb{N}_0$,
then at most finitely many digits $z_j$ are different from 0.
If $z\in \mathbb{Z}$, $z< 0$,
then at most finitely many digits $z_j$ are different from $b-1$.
In particular,
$-1 = \sum_{j\ge 0} (b-1)\, b^j.$

We recall the following concepts from \cite{Hel10a,Hel11a}.
\begin{defn}
The map $\varphi_b: \mathbb{Z}_b \rightarrow [0,1)$,
given by
$\varphi_b(\sum_{j\ge 0} z_j\, b^j) = \sum_{j\ge 0} z_j\, b^{-j-1}\pmod{1}$,
will be called the {\em $b$-adic Monna map}.
\end{defn}

The restriction of $\varphi_b$ to $\mathbb{N}_0$ is often called the {\em radical-inverse function}
in base $b$.
The Monna map is surjective, but not injective.
It may be inverted in the following sense.
\begin{defn}
We define the {\em pseudoinverse} $\varphi^+_b$ of the $b$-adic Monna map $\varphi_b$  by
\[
\varphi^+_b: [0,1) \rightarrow \mathbb{Z}_b, \quad
\varphi^+_b(\sum_{j\ge 0} x_j\, b^{-j-1}) = \sum_{j\ge 0} x_j\, b^{j}\;,
\]
where $\sum_{j\ge 0} x_j\, b^{-j-1}$ stands for the regular $b$-adic
representation of the element $x\in [0,1)$.
\end{defn}

The image of  $[0,1)$ under $\varphi^+_b$ is the set  $\mathbb{Z}_b\setminus (-\mathbb{N})$.
Furthermore, $\varphi_b\circ \varphi^+_b$ is the identity map on $[0,1)$,
and $\varphi^+_b\circ \varphi_b$ the identity on $\mathbb{N}_0 \subset \mathbb{Z}_b$.
In general,
$z\neq \varphi^+_b(\varphi_b(z))$,  for $z\in \mathbb{Z}_b$.
For example,
if $z=-1$,
then $\varphi^+_b(\varphi_b(-1))=\varphi^+_b(0)=0\neq -1$.

It has been shown in \cite{Hel11a} that the dual group
 $\hat{\mathbb{Z}}_b$
can be written in the form
$\hat{\mathbb{Z}}_b= \{\chi_k: k\in \mathbb{N}_0 \},$
where
$\chi_k: \mathbb{Z}_b\rightarrow \{ c\in \mathbb{C}: |c|=1 \}$,
$\chi_k( \sum_{j\ge 0} z_j b^j ) =     e(\varphi_b(k)(z_0+z_1b+\cdots))$.
We note that $\chi_k$ depends only on a finite number
of digits of $z$ and, hence, this function is well defined.

As in \cite{Hel10a},
we employ the function $\varphi_b^+$ to lift the characters $\chi_k$ to the torus.

\begin{defn}
For $k\in \N_0$,
let
$\gamma_k: [0,1) \rightarrow \{ c\in \mathbb{C}: |c|=1 \}$,
$\gamma_k(x)=     \chi_k(\varphi^+_b(x))$,
denote the $k$th $b$-adic function.
We put $\Gamma_b=\{\gamma_k:  k\in \mathbb{N}_0  \}$
and call it the $b$-adic function system on $[0,1)$.
\end{defn}

There is an obvious generalization of the preceding notions to the
higher-dimensional case.
Let $\mathbf{b}=(b_1, \ldots, b_s)$ be a vector of
not necessarily distinct integers $b_i\ge 2$,
let ${\mathbf{x}}= (x_1, \ldots, x_s)\in [0,1)^s$,
let $\mathbf{z}=(z_1, \ldots, z_s)$ denote an element of the compact product group
$\mathbb{Z}_\mathbf{b}= \mathbb{Z}_{b_1}\times \cdots \times \mathbb{Z}_{b_s}$
of $\mathbf{b}$-adic integers,
and let ${\mathbf{k}}= (k_1, \ldots, k_s)\in \mathbb{N}_0^s$.
We define
$\varphi_{\mathbf{b}}(\mathbf{z}) = (\varphi_{b_1}(z_1), \ldots, \varphi_{b_s}(z_s))$,
and
$\varphi^+_{\mathbf{b}}(\mathbf{x}) = (\varphi^+_{b_1}(x_1), \ldots, \varphi^+_{b_s}(x_s))$.

Let
$\chi_{\mathbf{k}}(\mathbf{z}) = \prod_{i=1}^s \chi_{k_i}(z_i)$,
where $\chi_{k_i} \in \hat{\mathbb{Z}}_{b_i}$,
and define
$\gamma_{\mathbf{k}}({\mathbf{x}}) = \prod_{i=1}^s \gamma_{k_i}(x_i)$,
where $\gamma_{k_i} \in \Gamma_{b_i}$, $1\le i\le s$.
Then
$\gamma_{\mathbf{k}} = \chi_{\mathbf{k}}\circ \varphi^+_{\mathbf{b}}$.
Let $\Gamma_{\mathbf{b}} = \{ \gamma_{\mathbf{k}}:
\mathbf{k} \in \mathbb{N}_0^s \}$
denote the {\em $\mathbf{b}$-adic function system} in dimension $s$.

The dual group $\hat{\Z}_\bfb$ is an orthonormal basis of
the Hilbert space $L^2(\Z_\bfb)$.
It would follow from some measure-theoretic arguments that $\Gamma_\bfb$ is
an orthonormal basis of $L^2([0,1)^s)$.
A rather elementary proof of this result is given in \cite[Theorem 2.12]{Hel11a}.

\begin{defn}
For $k\in \mathbb{N}_0$,
$k= \sum_{j\ge 0} k_j b^j$,
and $x\in [0,1)$, with regular $b$-adic representation
$x= \sum_{j\ge 0} x_j b^{-j-1}$,
the {\em $k$th Walsh function in base $b$} is defined by
$w_k(x) = e( (\sum_{j\ge 0} k_j x_j)/b)$.
For ${\mathbf{k}}\in \mathbb{N}_0^s$, ${\mathbf{k}}=(k_1, \ldots, k_s)$,
and ${\mathbf{x}}\in [0,1)^s$,
${\mathbf{x}}=(x_1, \ldots, x_s)$,
we define the {\em ${\mathbf{k}}$th Walsh function $w_{\mathbf{k}}$
in base $\mathbf{b}=(b_1, \ldots, b_s)$} on  $[0,1)^s$ as the following product:
$w_{\mathbf{k}}(\mathbf{x}) = \prod_{i=1}^s
	w_{k_i}(x_i)$,
where $w_{k_i}$ denotes the $k_i$th Walsh function in base $b_i$,
$1\le i\le s$.
The Walsh function system in base $\mathbf{b}$,
in dimension $s$, is denoted by
$\mathcal{W}_{\mathbf{b}} = \{w_{\mathbf{k}}:   \mathbf{k}\in \mathbb{N}_0^s\}$.
\end{defn}

We refer the reader to \cite{Dick10a,Hel93a,Hel98a} for
elementary properties of the Walsh functions and to
\cite{Sch90a} for the background in harmonic analysis.

The following notion is a special case of the concepts discussed in \cite{Hel10c}.
For given dimensions $s_1$ and $s_2$,
with $s_1, s_2 \in \mathbb{N}_0$, not both equal to 0,
put $s=s_1+s_2$ and write a point $\bfy\in \R^s$ in the form $\bfy=(\bfy^{(1)}, \bfy^{(2)})$
with components $\bfy^{(j)}\in \R^{s_j}$, $j=1,2$.
Let us fix two vectors of  bases $\mathbf{b}^{(1)}=(b_1, \ldots, b_{s_1})$, and
$\mathbf{b}^{(2)}=(b_{s_1+1}, \ldots, b_{s_1+s_2})$,
with not necessarily distinct integers $b_i\ge 2$, $1\le i\le s$,
and let $\bfb=(\bfb^{(1)}, \bfb^{(2)})$.
Let $\mathbf{k} = (\mathbf{k}^{(1)}, \mathbf{k}^{(2)})$,
with components $\mathbf{k}^{(1)}\in \mathbb{N}_0^{s_1}$, and
$\mathbf{k}^{(2)}\in \mathbb{N}_0^{s_2}$.
The tensor product
$\xi_{\mathbf{k}}= w_{\mathbf{k}^{(1)}} \otimes \gamma_{\mathbf{k}^{(2)}}$,
where
$w_{\mathbf{k}^{(1)}}\in \mathcal{W}_{\mathbf{b}^{(1)}}$,
and $\gamma_{\mathbf{k}^{(2)}}\in \Gamma_{\mathbf{b}^{(2)}}$,
defines a function $\xi_{\mathbf{k}}$ on the $s$-dimensional unit cube,
\[
\xi_{\mathbf{k}}: [0,1)^s \rightarrow \mathbb{C},
\quad \xi_{\mathbf{k}}(\mathbf{x}) =
w_{\mathbf{k}^{(1)}}(\mathbf{x}^{(1)})  \gamma_{\mathbf{k}^{(2)}}(\mathbf{x}^{(2)})\;,
\]
where $\mathbf{x}= (\mathbf{x}^{(1)}, \mathbf{x}^{(2)})\in [0,1)^s$.

\begin{defn}
The family of functions
\[
\mathcal{W}_{\mathbf{b}^{(1)}}\otimes \Gamma_{\bfb^{(2)}} =
    \{ \xi_\bfk=w_{\bfk^{(1)}} \otimes \gamma_{\bfk^{(2)}},  \bfk= (\bfk^{(1)} ,
        \bfk^{(2)})\in \N_0^{s_1}\times \N_0^{s_2}\},
\]
is called a {\em hybrid function system} on $[0,1)^s$.
\end{defn}
It follows from \cite[Theorem 1 and Corollary 4]{Hel10c} and the techniques exhibited in
\cite{Hel11a} for non-prime bases $\bfb$
that such hybrid function systems are an orthonormal basis of $L^2([0,1)^s)$
and that a Weyl criterion holds.

\section{Results}\label{sec:results}

\begin{rem}
All of the following results remain valid if we change the order of the factors
in the hybrid function system,
as it will become apparent from the proofs below.
In particular, we may select some arbitrary $s_1$ coordinates and analyze them with the Walsh system
$\mathcal{W}_{\mathbf{b}^{(1)}}$ and treat the remaining $s_2$ coordinates with the $\bfb^{(2)}$-adic system.
\end{rem}

For an integrable function $f$ on $[0,1)^s$, the $\bfk$th Fourier
coefficient of $f$ with respect to the function system
$\mathcal{W}_{\mathbf{b}^{(1)}}\otimes \Gamma_{\bfb^{(2)}} $ is defined in the usual manner,
as the inner product of $f$ and $\xi_\bfk$ in $L^2([0,1)^s)$:
\[
\hat{f}(\bfk) = \int_{[0,1)^s}
f \overline{\xi_\bfk}\ d\lambda_s, \quad \bfk \in \N_0^s.
\]
The reader should notice that this definition encompasses the cases of Walsh and
$\mathbf{b}$-adic Fourier coefficients, by putting $s=s_1$ or $s=s_2$.

We denote the formal Fourier series of $f$ by $s_f$,
\[
s_f = \sum_{\bfk \in \N_0^s} \hat{f}(\bfk) \xi_\bfk,
\]
where, for the moment,
we ignore questions of convergence.

\begin{defn}
A {\em $\mathbf{b}$-adic elementary interval},
or {\em $\mathbf{b}$-adic elint} for short,
is a subinterval $I_{\mathbf{c},\mathbf{g}}$ of $[0,1)^s$ of the form
\[
I_{\mathbf{c}, \mathbf{g}}=\prod_{i=1}^s
    \left[ \varphi_{b_i}(c_i), \varphi_{b_i}(c_i) + b_i^{-g_i}  \right)\;,
\]
where the parameters are subject to the conditions
$\mathbf{g}=(g_1, \ldots, g_s)\in \mathbb{N}_0^s$,
$\mathbf{c}=(c_1, \ldots, c_s)\in \mathbb{N}_0^s$, and
$0\le c_i < b_i^{g_i}$, $1\le i\le s$.
We say that  $I_{\mathbf{c}, \mathbf{g}}$ belongs to the resolution class defined by $\mathbf{g}$
or that it has resolution $\mathbf{g}$.

A {\em $\mathbf{b}$-adic  interval} in the resolution class defined by $\mathbf{g}\in \N_0^s$
(or with resolution $\mathbf{g}$)
is a sub\-interval  of $[0,1)^s$ of the form
\[
	\prod_{i=1}^s \left[
    a_i b_i^{-g_i}, d_i b_i^{-g_i}
	\right),
    \quad 0\le a_i < d_i \le b_i^{g_i}, \;  a_i, d_i \in \mathbb{N}_0, \;1\le i\le s\; .
\]
\end{defn}

For a given resolution
$\mathbf{g}\in \mathbb{N}_0^s$,
we define the following domains:
\begin{align*}
\Delta_{\mathbf{b}}(\mathbf{g}) &= \left\{\mathbf{k}=(k_1, \ldots, k_s)\in \mathbb{N}_0^s :\
0\le k_i < b_i^{g_i}, 1\le i\le s  \right\},\\
\Delta_{\mathbf{b}}^*(\mathbf{g}) &= \Delta_{\mathbf{b}}(\mathbf{g})
    \setminus \{\mathbf{0} \}\;.\nonumber
\end{align*}
We note that $\Delta_\bfb(\bf0)=\{ \bfzero\}$.

\begin{rem}
\label{s:badic:rem:partition}
For a given resolution $\bfg \in \N_0^s$,
the family of $\mathbf{b}$-adic elints $\{I_{\bfc, \bfg}: \bfc\in \Delta_\bfb(\bfg)  \}$
is a partition of $\Is$.
\end{rem}

The following function will allow for a compact notation.
For $k\in \N_0$,
with $b$-adic representation $k=k_0 + k_1 b + \cdots$,
we define
\[
v_b(k) =
  \begin{cases}
   0 & \text{if } k=0, \\
   1 + \max\{j: k_j \neq 0\} & \text{if } k\ge 1.
  \end{cases}
\]
If $\bfk \in \N_0^s$, then let
$
v_\bfb(\bfk) = (v_{b_1}(k_1), \ldots, v_{b_s}(k_s)).
$

\begin{lem}\label{lem:piecewiseconstant}
For every $\bfk\in \N_0^s$,
$\xi_\bfk$ is a step function on $[0,1)^s$ and the following identity holds:
\begin{equation}
\label{eqn:mainID100}
\forall \bfx\in [0,1)^s: \quad \xi_\bfk(\bfx)
    =  \sum_{\bfc\in \Delta_\bfb(v_\bfb(\bfk))}
    \xi_\bfk(\varphi_\bfb(\bfc))
   \myone_{I_{\bfc,v_\bfb(\bfk)}}(\bfx).
\end{equation}
%valid for all $\bfx\in [0,1)^s$.
\end{lem}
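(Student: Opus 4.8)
The plan is to exploit the product structure shared by all three ingredients of the identity and thereby reduce the $s$-dimensional statement to two one-dimensional statements, one for Walsh functions and one for $b$-adic functions. First I would record that $\xi_\bfk(\bfx)=\prod_{i=1}^{s_1} w_{k_i}(x_i)\prod_{i=s_1+1}^{s}\gamma_{k_i}(x_i)$, that the index set factors as $\Delta_\bfb(v_\bfb(\bfk))=\prod_{i=1}^{s}\{0,\ldots,b_i^{\,v_{b_i}(k_i)}-1\}$, and that both the indicator $\myone_{I_{\bfc,\bfg}}$ and the prescribed value $\xi_\bfk(\varphi_\bfb(\bfc))$ split as products over the coordinates. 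Consequently the right-hand side of \eqref{eqn:mainID100} is a product of $s$ one-dimensional sums, and it suffices to prove, for a single base $b$ and every $k\in\N_0$ with $g:=v_b(k)$,
\[
f(x)=\sum_{c=0}^{b^{g}-1} f(\varphi_b(c))\,\myone_{[\varphi_b(c),\,\varphi_b(c)+b^{-g})}(x)\qquad(x\in[0,1)),
\]
where $f$ denotes either $w_k$ or $\gamma_k$.

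Second, the engine of the one-dimensional statement is the single observation that both $w_k$ and $\gamma_k$ depend only on the first $g=v_b(k)$ digits $x_0,\ldots,x_{g-1}$ of the regular $b$-adic representation of $x$. For $w_k$ this is immediate from $w_k(x)=e\big((\sum_{j\ge 0}k_j x_j)/b\big)$, since $k_j=0$ for every $j\ge v_b(k)$ by the definition of $v_b$. For $\gamma_k=\chi_k\circ\varphi_b^{+}$ I would expand $\chi_k(\varphi_b^{+}(x))=e\big(\varphi_b(k)\sum_{i\ge 0}x_i b^{i}\big)$ and examine the cross terms $k_j x_i b^{\,i-j-1}$ with $0\le j\le g-1$; such a term is an integer, hence irrelevant modulo $1$, whenever $i\ge j+1$, so only the digits $x_0,\ldots,x_{g-1}$ survive. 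This sharpens the remark in the excerpt that $\chi_k$ depends on only finitely many digits, understood rigorously via the fact that $\chi_k$ factors through the finite quotient $\Z_b/b^{v_b(k)}\Z_b$, and pins the number of relevant digits down to exactly $v_b(k)$.

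Third, I would identify each elint with a digit-prescription set. For $0\le c<b^{g}$ with $c=\sum_{j=0}^{g-1}c_j b^{j}$, the point $\varphi_b(c)=\sum_{j=0}^{g-1}c_j b^{-j-1}$ is the left endpoint of $I_{c,g}=[\varphi_b(c),\varphi_b(c)+b^{-g})$, and this half-open interval consists precisely of those $x\in[0,1)$ whose regular representation begins $x_0=c_0,\ldots,x_{g-1}=c_{g-1}$; that the $b^{g}$ such intervals tile $[0,1)$ is the one-dimensional case of Remark~\ref{s:badic:rem:partition}. Since the regular representation of $\varphi_b(c)$ has digits $c_0,\ldots,c_{g-1},0,0,\ldots$, which agree with those of every $x\in I_{c,g}$ in positions $0,\ldots,g-1$, the second observation forces $f(x)=f(\varphi_b(c))$ throughout $I_{c,g}$. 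Thus $f$ is a step function, and summing its constant values against the indicators of the partition yields the one-dimensional identity; multiplying the $s$ coordinate factors back together gives \eqref{eqn:mainID100}. The degenerate case $k=0$, where $g=0$, $\Delta_b(0)=\{0\}$ and the single interval is all of $[0,1)$ with $f\equiv 1$, is included.

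I expect the main obstacle to lie in the second step for $\gamma_k$: one must argue carefully that the product $\varphi_b(k)\cdot\varphi_b^{+}(x)$, formed from a $b$-adic rational and a $b$-adic integer, enters $e(\cdot)$ only through $x_0,\ldots,x_{g-1}$, and that the relevant threshold is \emph{exactly} $v_b(k)$ rather than some larger bound. The Walsh factor, the index-and-interval bookkeeping of the first step, and the identification of elints with digit blocks in the third step are then routine once this digit count is in hand.
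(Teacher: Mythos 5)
Your proposal is correct and takes essentially the same route as the paper, whose proof of this lemma consists only of citations: Remark (iii) of \cite{Hel93a} for the Walsh factors and the argument of Lemma 3.5 of \cite{Hel10a} for the $b$-adic factors, both of which rest on exactly the observation you make, namely that each one-dimensional factor depends only on the first $v_b(k)$ regular digits of $x$ and is therefore constant on every elint of resolution $v_b(k)$. Your write-up simply makes that delegated argument self-contained (including the correct handling of the cross terms $k_j x_i b^{i-j-1}$ for $\gamma_k$ and the degenerate case $k=0$), so there is nothing to object to.
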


\begin{proof}
For Walsh functions in base $\bfb^{(1)}$,
this follows from \cite[Remark (iii), p.211]{Hel93a}.
For the $\bfb^{(2)}$-adic functions,
we argue as in the proof of Lemma 3.5 in \cite{Hel10a}.
\end{proof}

\begin{cor}
For every $\bfk\neq \bfzero$,
the function $\xi_\bfk$ has integral $0$.
\end{cor}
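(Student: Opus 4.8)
The plan is to deduce the corollary directly from Lemma~\ref{lem:piecewiseconstant} together with the partition property recorded in Remark~\ref{s:badic:rem:partition}. The idea is that, once we know $\xi_\bfk$ is a step function constant on each $\bfb$-adic elint $I_{\bfc,v_\bfb(\bfk)}$, its integral reduces to a finite sum of values weighted by the measures of these intervals; the vanishing will then come from a character-sum cancellation forced by the hypothesis $\bfk\neq\bfzero$.

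First I would integrate the identity \eqref{eqn:mainID100} over $[0,1)^s$. Since for each $\bfc\in\Delta_\bfb(v_\bfb(\bfk))$ the indicator $\myone_{I_{\bfc,v_\bfb(\bfk)}}$ integrates to $\lambda_s(I_{\bfc,v_\bfb(\bfk)})=\prod_{i=1}^s b_i^{-v_{b_i}(k_i)}$, a quantity independent of $\bfc$, I obtain
\[
\int_{[0,1)^s}\xi_\bfk\,d\lambda_s
=\Big(\prod_{i=1}^s b_i^{-v_{b_i}(k_i)}\Big)
\sum_{\bfc\in\Delta_\bfb(v_\bfb(\bfk))}\xi_\bfk(\varphi_\bfb(\bfc)).
\]
So it suffices to show that the sum $\sum_{\bfc\in\Delta_\bfb(v_\bfb(\bfk))}\xi_\bfk(\varphi_\bfb(\bfc))$ vanishes whenever $\bfk\neq\bfzero$. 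Because $\xi_\bfk$ is the product $\prod_{i=1}^s$ of one-dimensional factors (Walsh factors for the first $s_1$ coordinates, $\bfb^{(2)}$-adic factors for the rest) and $\Delta_\bfb(v_\bfb(\bfk))$ is a Cartesian product, this sum factorizes as a product of one-dimensional sums $\sum_{0\le c_i<b_i^{v_{b_i}(k_i)}}$. Since $\bfk\neq\bfzero$, there is at least one index $i$ with $k_i\neq0$, hence $v_{b_i}(k_i)\ge1$; it is then enough to prove that the corresponding one-dimensional factor sum equals $0$.

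The core computation is therefore the one-dimensional cancellation: for $k\neq0$ with $g=v_b(k)\ge1$, I would show $\sum_{c=0}^{b^g-1}\gamma_k(\varphi_b(c))=0$, and likewise for the Walsh factor. Evaluating $\gamma_k(\varphi_b(c))=\chi_k(\varphi_b^+(\varphi_b(c)))=\chi_k(c)$ for $0\le c<b^g$ (using that $\varphi_b^+\circ\varphi_b$ is the identity on $\N_0$), the definition of $\chi_k$ exhibits this as a sum of the form $\sum_c e(\varphi_b(k)\,(c_0+c_1b+\cdots))$; since $k$ has a nonzero digit in position $g-1$ this is a nontrivial character on the relevant finite product of cyclic groups, and its sum over a full period vanishes. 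The analogous geometric-sum cancellation for $w_k$ is the same phenomenon in the Walsh setting. The main obstacle I anticipate is handling arbitrary (non-prime) bases cleanly: one must argue the cancellation as a product of one-dimensional digit sums $\sum_{c_j=0}^{b-1}e(\text{nonzero})=0$ rather than invoking an integral-domain property of $\Z/b\Z$, so the bookkeeping of digit ranges and the exact $\chi_k$ formula is where care is needed, though none of it is deep.
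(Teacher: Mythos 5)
Your proposal is correct and follows the same route as the paper: the paper's proof likewise integrates the identity \eqref{eqn:mainID100} and reduces the claim to the vanishing of $\sum_{\bfc\in \Delta_\bfb(v_\bfb(\bfk))} \xi_\bfk(\varphi_\bfb(\bfc))$, which it leaves as "easy to prove." You have simply supplied the details of that cancellation (factorization into one-dimensional geometric sums and the nontriviality of the root of unity $e(m/b)$ with $b\nmid m$, which indeed requires no primality of $b$), and those details are right.
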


\begin{proof}
If  $\bfk\neq \bfzero$,
then it is easy to prove that $ \sum_{\bfc\in \Delta_\bfb(v_\bfb(\bfk))} \xi_\bfk(\varphi_\bfb(\bfc)) =0$.
Taking the integral in (\ref{eqn:mainID100}),
the result follows.
\end{proof}

A key ingredient in the $\bfb$-adic approach to the theory of uniform distribution of sequences
is the study of the Fourier series of
indicator functions $\myone_I$ of $\bfb$-adic elints and intervals $I$.

\begin{lem}\label{lem:Fourier}
Let $I_{\bfc,\bfg}$ be an arbitrary $\mathbf{b}$-adic elint.
Then
\[
\hat{{\bf 1}}_{I_{\bfc,\bfg}}(\bfk) =
    \begin{cases}
   0 & \text{if } \bfk\not\in \Delta_\bfb(\bfg), \\
   \lambda_s(I_{\bfc,\bfg}) \overline{\xi_\bfk(\varphi_\bfb(\bfc))} & \text{if } \bfk\in \Delta_\bfb(\bfg).
  \end{cases}
\]
\end{lem}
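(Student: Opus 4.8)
The plan is to compute the Fourier coefficient straight from its definition,
\[
\hat{\myone}_{I_{\bfc,\bfg}}(\bfk)=\int_{I_{\bfc,\bfg}}\overline{\xi_\bfk(\bfx)}\,d\lambda_s(\bfx),
\]
and to exploit the product structure of both the hybrid function and the elint. Since $\xi_\bfk(\bfx)=\prod_{i=1}^{s_1}w_{k_i}(x_i)\prod_{i=s_1+1}^{s}\gamma_{k_i}(x_i)$ and $I_{\bfc,\bfg}$ is a Cartesian product of the one-dimensional elints $[\varphi_{b_i}(c_i),\varphi_{b_i}(c_i)+b_i^{-g_i})$, Fubini's theorem factorizes the integral into a product of $s$ one-dimensional integrals. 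It therefore suffices to treat a single coordinate, separately for a Walsh factor $w_k$ and for a $b$-adic factor $\gamma_k$, and to multiply the outcomes.

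First I would dispose of the case $\bfk\in\Delta_\bfb(\bfg)$, that is $v_{b_i}(k_i)\le g_i$ for every $i$, equivalently $v_\bfb(\bfk)\le\bfg$ componentwise. By Lemma~\ref{lem:piecewiseconstant} the function $\xi_\bfk$ is constant on each elint of resolution $v_\bfb(\bfk)$, and since $v_\bfb(\bfk)\le\bfg$ the elint $I_{\bfc,\bfg}$ lies inside a single such coarser elint. As $\varphi_\bfb(\bfc)$ is the left endpoint of $I_{\bfc,\bfg}$, it belongs to that coarser elint, so $\xi_\bfk$ equals the constant $\xi_\bfk(\varphi_\bfb(\bfc))$ throughout $I_{\bfc,\bfg}$. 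Pulling this constant out of the integral gives $\hat{\myone}_{I_{\bfc,\bfg}}(\bfk)=\lambda_s(I_{\bfc,\bfg})\,\overline{\xi_\bfk(\varphi_\bfb(\bfc))}$, as asserted.

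The main work lies in the case $\bfk\notin\Delta_\bfb(\bfg)$, where some coordinate $i$ satisfies $k_i\ge b_i^{g_i}$, hence $v_{b_i}(k_i)>g_i$ and in particular $k_i\neq0$. By the factorization it is enough to show that the corresponding one-dimensional integral over the coarser elint vanishes,
\[
\int_{[\varphi_b(c),\,\varphi_b(c)+b^{-g})}\overline{\psi_k(x)}\,dx=0\qquad(g<v_b(k)),
\]
where $\psi_k$ denotes either $w_k$ or $\gamma_k$. I would establish this by a self-similarity argument. Substituting $x=\varphi_b(c)+b^{-g}y$ with $y\in\I$, one checks that the first $g$ digits of $x$ are frozen to those of $c$ while the remaining digits are those of $y$; consequently $\psi_k(x)=\psi_k(\varphi_b(c))\,\psi_{k'}(y)$, where $k'=\floor{k/b^g}$ is obtained by deleting the lowest $g$ digits of $k$. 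Since $v_b(k)>g$ we have $k'\neq0$, and the change of variables turns the integral into $b^{-g}\,\overline{\psi_k(\varphi_b(c))}\int_{\I}\overline{\psi_{k'}(y)}\,dy$, which is $0$ because a function of nonzero index integrates to $0$ over $\I$ by the Corollary to Lemma~\ref{lem:piecewiseconstant}.

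I expect the delicate point to be the identity $\psi_k(x)=\psi_k(\varphi_b(c))\,\psi_{k'}(y)$ in the $b$-adic case. For the Walsh functions it is immediate from $w_k(x)=e\big((\sum_j k_j x_j)/b\big)$ and splitting the digit sum at index $g$. For the $b$-adic functions one uses $\gamma_k(x)=e(\varphi_b(k)\,\varphi_b^+(x))$ together with $\varphi_b^+(x)=c+b^g\varphi_b^+(y)$ and the congruence $\varphi_b(k)\,b^g\equiv\varphi_b(k')\pmod1$; tracking these relations mod $1$, and checking that the resulting character sum $\sum_t e(\varphi_b(k')\,t)$ over a complete block of residues vanishes precisely because $k'\neq0$, is where care is needed. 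Equivalently, one may argue entirely through Lemma~\ref{lem:piecewiseconstant} and Remark~\ref{s:badic:rem:partition}, writing the integral as the partial sum $\sum_{\bfc'}\xi_\bfk(\varphi_\bfb(\bfc'))$ over the sub-elints of $I_{\bfc,\bfg}$ of resolution $v_\bfb(\bfk)$ and showing that this block sum is $0$.
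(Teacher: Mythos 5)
Your argument is correct, and it shares the paper's first step: both proofs use the Cartesian product structure of $I_{\bfc,\bfg}$ and the tensor-product form of $\xi_\bfk$ to factor $\hat{\myone}_{I_{\bfc,\bfg}}(\bfk)$ into $s$ one-dimensional Fourier coefficients, treated separately for the Walsh and the $b$-adic coordinates. The difference lies in what happens next: the paper simply cites the one-dimensional computations from earlier work (Lemmas 1--3 of \cite{Hel93a} for $\mathcal{W}_{\bfb^{(1)}}$, Lemmas 3.1, 3.3, 3.5 of \cite{Hel09a} for $\Gamma_{\bfb^{(2)}}$, each ``rewritten'' for general bases), whereas you supply a self-contained proof: local constancy of $\xi_\bfk$ at resolution $v_\bfb(\bfk)$ handles the case $\bfk\in\Delta_\bfb(\bfg)$, and a digit-shift self-similarity $\psi_k(\varphi_b(c)+b^{-g}y)=\psi_k(\varphi_b(c))\,\psi_{k'}(y)$ with $k'=\floor{k/b^g}\neq 0$ reduces the vanishing case to the zero-integral Corollary. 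Your identification of the delicate point is accurate: for $\gamma_k$ the factorization rests on $\varphi_b^+(x)=c+b^g\varphi_b^+(y)$ and $\varphi_b(k)\,b^g\equiv\varphi_b(\floor{k/b^g})\pmod 1$, which is exactly the content of the cited $p$-adic lemmas generalized to arbitrary $b$; note also that the digit correspondence under your substitution fails only on a null set of non-regular representations, which is harmless for the integral. What your route buys is independence from the external references (useful since the paper itself only sketches how those references adapt to non-prime bases); what the paper's route buys is brevity and consistency with the toolkit it reuses elsewhere.
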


\begin{proof}
The $s$-dimensional elint $I_{\bfc, \bfg}$ can be written as the cartesian product
$I_{\bfc^{(1)}, \bfg^{(1)}} \times I_{\bfc^{(2)}, \bfg^{(2)}}$.
As a consequence,
the Fourier coefficient can be written in the form
\[
\hat{\myone}_{I_{\bfc,\bfg}}(\bfk) = \hat{\myone}_{I_{\bfc^{(1)}, \bfg^{(1)}}}(\bfk^{(1)})\;
     \hat{\myone}_{I_{\bfc^{(2)}, \bfg^{(2)}}}(\bfk^{(2)}),
\]
where the first factor is the Fourier coefficient with respect to $\mathcal{W}_{\bfb^{(1)}}$ and
the second factor stems from $\Gamma_{\bfb^{(2)}}$.

It is an easy exercise to rewrite the proofs of Lemmas 1,2, and 3 in \cite{Hel93a} to handle
the Walsh system $\mathcal{W}_{\bfb^{(1)}}$.
This yields
\[
\hat{{\bf 1}}_{I_{\bfc^{(1)}, \bfg^{(1)}}}(\bfk^{(1)}) =
    \begin{cases}
   0 & \text{if } \bfk^{(1)}\not\in \Delta_{\bfb^{(1)}}(\bfg^{(1)}), \\
   \lambda_{s_1}(I_{\bfc^{(1)},\bfg^{(1)}})\; \overline{w_{\bfk^{(1)}}(\varphi_{\bfb^{(1)}}(\bfc^{(1)}))}
    & \text{if } \bfk^{(1)}\in \Delta_{\bfb^{(1)}}(\bfg^{(1)}).
  \end{cases}
\]
In the same manner,
we treat the $\bfb^{(2)}$-adic Fourier coefficients.
For this task,
is suffices to modify Lemmas 3.1, 3.3, and 3.5 in \cite{Hel09a} for the base $\bfb^{(2)}$.
We obtain
\[
\hat{{\bf 1}}_{I_{\bfc^{(2)}, \bfg^{(2)}}}(\bfk^{(2)}) =
    \begin{cases}
   0 & \text{if } \bfk^{(2)}\not\in \Delta_{\bfb^{(2)}}(\bfg^{(2)}), \\
   \lambda_{s_2}(I_{\bfc^{(2)},\bfg^{(2)}})\; \overline{\gamma_{\bfk^{(2)}}(\varphi_{\bfb^{(2)}}(\bfc^{(2)}))}
    & \text{if } \bfk^{(2)}\in \Delta_{\bfb^{(2)}}(\bfg^{(2)}).
  \end{cases}
\]
This proves the result.
\end{proof}

\begin{lem}\label{lem:pointwiseIdentity}
Let $I_{\bfc,\bfg}$ be an arbitrary $\mathbf{b}$-adic elint and
put $f=\myone_{I_{\bfc,\bfg}}$.
Then $f=s_f$ in the space $L^2([0,1)^s)$
and even pointwise equality holds:
\begin{equation}\label{s:badic:eqn:pointwiseIdentity}
\forall\bfx\in[0,1)^s: \quad
\myone_{I_{\bfc,\bfg}}(\bfx)
    = \sum_{\bfk \in \Delta_\bfb(\bfg)}
    \hat{\myone}_{I_{\bfc,\bfg}}(\bfk)\xi_\bfk(\bfx).
\end{equation}
\end{lem}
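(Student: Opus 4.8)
The plan is to exploit the fact that, by Lemma~\ref{lem:Fourier}, the Fourier coefficients $\hat{\myone}_{I_{\bfc,\bfg}}(\bfk)$ vanish for every $\bfk\notin\Delta_\bfb(\bfg)$, so that the formal series $s_f$ collapses to the \emph{finite} sum appearing on the right-hand side of \eqref{s:badic:eqn:pointwiseIdentity}. This immediately removes every convergence question: the partial sum over $\Delta_\bfb(\bfg)$ \emph{is} the whole series. Since $f=\myone_{I_{\bfc,\bfg}}$ is bounded and $\Is$ has finite measure, we have $f\in L^2(\Is)$, and because $\mathcal{W}_{\bfb^{(1)}}\otimes\Gamma_{\bfb^{(2)}}$ is an orthonormal basis of $L^2(\Is)$, the expansion $f=s_f$ holds in the $L^2$-sense; in particular $f=s_f$ holds $\lambda_s$-almost everywhere. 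The only remaining task is to upgrade this almost-everywhere identity to a genuinely pointwise one.

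First I would observe that both sides of \eqref{s:badic:eqn:pointwiseIdentity} are step functions that are constant on each $\bfb$-adic elint of resolution $\bfg$. For the left-hand side this is clear: $\myone_{I_{\bfc,\bfg}}$ equals $1$ on $I_{\bfc,\bfg}$ and $0$ on every other elint $I_{\bfc',\bfg}$, $\bfc'\neq\bfc$, and by Remark~\ref{s:badic:rem:partition} these elints partition $\Is$. For the right-hand side, note that for $\bfk\in\Delta_\bfb(\bfg)$ the condition $0\le k_i<b_i^{g_i}$ forces $v_{b_i}(k_i)\le g_i$ for each $i$, i.e.\ $v_\bfb(\bfk)\le\bfg$ componentwise. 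Consequently the resolution-$\bfg$ partition refines the coarser resolution-$v_\bfb(\bfk)$ partition, so each elint of resolution $\bfg$ lies inside a single elint of resolution $v_\bfb(\bfk)$; by Lemma~\ref{lem:piecewiseconstant}, $\xi_\bfk$ is constant on elints of resolution $v_\bfb(\bfk)$, hence also on the finer elints of resolution $\bfg$. A finite linear combination of such functions, namely $s_f$, is then itself constant on every elint of resolution $\bfg$.

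The concluding step will be to pass from almost-everywhere to everywhere equality using this piecewise-constant structure. Fix any $\bfc'\in\Delta_\bfb(\bfg)$. On the elint $I_{\bfc',\bfg}$ both $f$ and $s_f$ are constant, say with values $\alpha_{\bfc'}$ and $\beta_{\bfc'}$. If $\alpha_{\bfc'}\neq\beta_{\bfc'}$, then $f$ and $s_f$ would differ on the whole of $I_{\bfc',\bfg}$, a set of measure $\lambda_s(I_{\bfc',\bfg})=\prod_{i=1}^s b_i^{-g_i}>0$, contradicting $f=s_f$ almost everywhere. Hence $\alpha_{\bfc'}=\beta_{\bfc'}$ for every $\bfc'$, and since the elints $\{I_{\bfc',\bfg}:\bfc'\in\Delta_\bfb(\bfg)\}$ cover $\Is$, the identity $f(\bfx)=s_f(\bfx)$ holds for all $\bfx\in\Is$. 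I expect the only genuinely delicate point to be precisely this last upgrade: almost-everywhere equality of two arbitrary $L^2$ functions is of course weaker than pointwise equality, and it is exactly the strict positivity of the measure of each elint, combined with the fact that \emph{both} functions are constant on each elint, that licenses the conclusion.
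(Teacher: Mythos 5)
Your proof is correct, but it follows a genuinely different route from the paper's. The paper disposes of the lemma by citation: the pointwise identity is taken from the proof of Theorem 1 in \cite{Hel93a} for the Walsh factor and from Lemma 2.11 in \cite{Hel11a} for the $\bfb$-adic factor, and the hybrid case is declared an easy exercise in combining the two; in those sources the identity is verified by a \emph{direct computation}, substituting the coefficients of Lemma \ref{lem:Fourier} into the finite sum and evaluating the resulting character sum over $\Delta_\bfb(\bfg)$ by orthogonality on the finite group $\prod_{i=1}^s \mathbb{Z}/b_i^{g_i}\mathbb{Z}$. You argue indirectly instead: Lemma \ref{lem:Fourier} collapses $s_f$ to a finite sum, completeness of $\mathcal{W}_{\bfb^{(1)}}\otimes\Gamma_{\bfb^{(2)}}$ gives $f=s_f$ in $L^2$ and hence almost everywhere, and the observation that both sides are constant on each cell of the positive-measure partition $\{I_{\bfc',\bfg}:\bfc'\in\Delta_\bfb(\bfg)\}$ (using $v_\bfb(\bfk)\le\bfg$ componentwise for $\bfk\in\Delta_\bfb(\bfg)$ together with Lemma \ref{lem:piecewiseconstant}) upgrades this to a pointwise identity. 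Every step is sound, and your approach has the merit of isolating exactly where pointwise, as opposed to almost-everywhere, equality comes from. The one caution concerns your reliance on completeness of the hybrid system: in \cite{Hel11a} the orthonormal-basis theorem (Theorem 2.12, to which the remark following this lemma points) is itself derived \emph{from} the pointwise identity, so you must source completeness independently --- e.g.\ via the measure-theoretic transfer from $\hat{\Z}_\bfb$ alluded to in Section 2, or the classical completeness of the Walsh system --- to avoid circularity. The paper's direct computation carries no such dependency and is in that sense more self-contained.
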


\begin{proof}
The pointwise identity between $\myone_{I_{\bfc^{(1)}, \bfg^{(1)}}}$ and its Fourier series with respect to
the Walsh system $\mathcal{W}_{\mathbf{b}^{(1)}}$ has been shown in the proof of Theorem 1 in \cite{Hel93a}.
For the function $\myone_{I_{\bfc^{(2)}, \bfg^{(2)}}}$ and the system $\Gamma_{\bfb^{(2)}}$,
the pointwise identity between the function and its Fourier series was proved in \cite[Lemma 2.11]{Hel11a}.

It is an easy exercise to rewrite the proof of Lemma 2.11 in \cite{Hel11a} for
the hybrid function system $\mathcal{W}_{\mathbf{b}^{(1)}}\otimes \Gamma_{\bfb^{(2)}} $.
This proves (\ref{s:badic:eqn:pointwiseIdentity}).
\end{proof}

\begin{rem}
Lemmas
\ref{lem:piecewiseconstant} - \ref{lem:pointwiseIdentity}
provide the necessary tools to
generalize Theorem A.11 in Dick and
Pillichshammer \cite[p. 562]{Dick10a} and,
in addition, exhibit a different method of proof.
For details,
we refer to Theorem 2.12 and Remark 2.13 in \cite{Hel11a}.
\end{rem}

\begin{lem}\label{lem:FCestimate}
Let $b\ge 2$, $0<\beta<1$ and put $J=[0,\beta)$.
Let $k\in \N$, $b^{g-1}\le k < b^g$, with $g\in \N$.
Then, in both cases,
for the Fourier coefficient relative to $\mathcal{W}_b$ as well as relative to
$\Gamma_b$,
we have the bound
\begin{equation}\label{eqn:FCestimate}
| \hat{\myone}_J(k)| \le \frac{1}{b^g \sin(\pi k_{g-1}/b)}.
\end{equation}
\end{lem}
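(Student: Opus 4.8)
The plan is to compute the Fourier coefficient directly as $\hat{\myone}_J(k)=\int_0^\beta\overline{\psi_k(x)}\,dx$, where $\psi_k$ stands for either the Walsh function $w_k$ or the $b$-adic function $\gamma_k$, and to exploit the fact that, since $b^{g-1}\le k<b^g$ forces $v_b(k)=g$, Lemma~\ref{lem:piecewiseconstant} makes $\psi_k$ constant on every interval $[mb^{-g},(m+1)b^{-g})$, $0\le m<b^g$ (these are precisely the one-dimensional elints of resolution $g$). The decisive structural observation, which I would establish first, is that on each block $B_n=[nb^{-(g-1)},(n+1)b^{-(g-1)})$ of resolution $g-1$ one has
\[
\overline{\psi_k(x)}=\overline{\eta_n}\,e(-k_{g-1}x_{g-1}/b),\qquad x\in B_n,
\]
where $x_{g-1}\in\{0,\dots,b-1\}$ is the $(g-1)$st digit of $x$ and $\eta_n$ is a constant with $\abs{\eta_n}=1$ depending only on the digits $x_0,\dots,x_{g-2}$ that stay fixed throughout $B_n$. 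For $w_k$ this is immediate from $w_k(x)=e\bigl((\sum_{j=0}^{g-1}k_jx_j)/b\bigr)$. For $\gamma_k$ it requires expanding $\gamma_k(x)=e\bigl(\varphi_b(k)\sum_{j\ge0}x_jb^j\bigr)$ and checking, modulo $1$, that the unique $x_{g-1}$-dependent contribution is $k_{g-1}x_{g-1}b^{-1}$. This digit bookkeeping for the $b$-adic system is the main technical point; it is the one-dimensional analogue of the computations in \cite{Hel09a,Hel10a}.

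Granting the block form, the integral of $\overline{\psi_k}$ over a \emph{complete} block vanishes, since
\[
\int_{B_n}\overline{\psi_k(x)}\,dx=\overline{\eta_n}\,b^{-g}\sum_{d=0}^{b-1}e(-k_{g-1}d/b)=0,
\]
the geometric sum being zero because $1\le k_{g-1}\le b-1$ is not divisible by $b$. Writing $N=\floor{\beta b^{g-1}}$, the interval $[0,Nb^{-(g-1)})$ is a union of complete blocks, so $\hat{\myone}_J(k)$ reduces to the integral over the single partial block $B_N$. Setting $D=\floor{(\beta-Nb^{-(g-1)})b^{g}}\in\{0,\dots,b-1\}$ and $\rho=\beta-Nb^{-(g-1)}-Db^{-g}\in[0,b^{-g})$, evaluating that integral gives
\[
\hat{\myone}_J(k)=\overline{\eta_N}\Bigl(b^{-g}\sum_{d=0}^{D-1}e(-k_{g-1}d/b)+\rho\,e(-k_{g-1}D/b)\Bigr).
\]

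It then remains to bound the bracketed expression. Putting $q=e(-k_{g-1}/b)$, so that $q\ne1$ and $\abs{q-1}=2\sin(\pi k_{g-1}/b)$, and $t=\rho b^{g}\in[0,1)$, summing the geometric series and recalling $\abs{\eta_N}=1$ yields
\[
\abs{\hat{\myone}_J(k)}=\frac{b^{-g}}{\abs{q-1}}\,\bigl|\,q^{D}(1-t+tq)-1\,\bigr|.
\]
Here $1-t+tq=(1-t)\cdot1+t\cdot q$ is a convex combination of two points on the unit circle, hence $\abs{1-t+tq}\le1$; since $\abs{q^{D}}=1$, the triangle inequality gives $\bigl|q^{D}(1-t+tq)-1\bigr|\le2$. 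Therefore
\[
\abs{\hat{\myone}_J(k)}\le\frac{2b^{-g}}{\abs{q-1}}=\frac{1}{b^{g}\sin(\pi k_{g-1}/b)},
\]
which is \eqref{eqn:FCestimate}. The computation is identical for $\mathcal{W}_b$ and $\Gamma_b$ once the block factorization is in place, so both cases are handled simultaneously. The genuinely delicate point is the convexity estimate $\abs{1-t+tq}\le1$, which is exactly what keeps the leftover term $\rho\,e(-k_{g-1}D/b)$ from spoiling the bound; a naive triangle-inequality split of the geometric sum and the leftover term would instead produce a strictly weaker estimate.
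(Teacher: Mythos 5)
Your proof is correct, and it is genuinely more self-contained than what the paper does: the paper disposes of Lemma \ref{lem:FCestimate} purely by citation, invoking Lemma 2 of \cite{Hel93a} for the Walsh case and asserting that the $\Gamma_b$ case follows by a ``verbatim translation'' of Lemma 3.3 of \cite{Hel09a} from prime to general bases. You instead isolate the one structural fact that both systems share --- on each resolution-$(g-1)$ block $B_n$ the function $\overline{\psi_k}$ factors as a unimodular constant times $e(-k_{g-1}x_{g-1}/b)$ --- and run a single computation that covers $\mathcal{W}_b$ and $\Gamma_b$ simultaneously; your digit bookkeeping for $\gamma_k$ (only the terms $k_ib^{j-i-1}$ with $j\le i\le g-1$ survive modulo $1$, so the $x_{g-1}$-dependence is exactly $k_{g-1}x_{g-1}/b$) is exactly the point that makes the ``verbatim translation'' to non-prime $b$ legitimate, and it is the part the paper leaves entirely to the reader. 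The reduction to the single partial block, the identity $\abs{\hat{\myone}_J(k)}=b^{-g}\abs{q-1}^{-1}\abs{q^{D}(1-t+tq)-1}$, and the convexity observation $\abs{1-t+tq}\le 1$ are all sound (and you are right that a naive split of the geometric sum and the leftover $\rho$-term would only give the weaker bound $1/(b^g\sin(\pi k_{g-1}/b))+b^{-g}$). What the paper's route buys is brevity and consistency with the earlier literature; what yours buys is a unified, checkable argument that actually exhibits why the same constant appears for both function systems. The only cosmetic caveat is the degenerate case $D=0$, $\rho=0$ (when $\beta b^{g-1}\in\N$), where your formula correctly returns $0$, so nothing breaks.
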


\begin{proof}
Note first that the condition $b^{g-1}\le k < b^g$, $g\in \N$,
implies that $k=k_0+k_1 b+\dots+k_{g-1}b^{g-1}$ with $k_{g-1}\neq 0$.
For $\mathcal{W}_b$,
the bound (\ref{eqn:FCestimate}) follows from Lemma 2 in \cite{Hel93a}.
For $\Gamma_b$,
(\ref{eqn:FCestimate}) is shown by a verbatim translation of Lemma 3.3 and its proof in \cite{Hel09a}
from the case of a prime base to a general base.
\end{proof}

For an integer base $b\ge 2$,
we define the weight functions
\begin{align*}
\rho_b(k) \;&=\; \left\{
    \begin{array} {c@{\quad \mbox{  if  } \quad}l}
    1 & k=0, \\[0.5ex] \displaystyle{
        \frac{2}{b^{t}\sin(\pi k_{t-1}/b)}
        } & b^{t-1}\le k< b^t, \ t\in \N,
    \end{array} \right. \\
\text{and} & \nonumber\\
\rho_\bfb(\bfk)\;&=\; \sprod \rho_{b_i}(k_i), \quad \bfk=(k_1, \ldots,k_s)\in \N_0^s.
\end{align*}
We also introduce the weights $\rho^*$:
$\rho_b^*(0)=1$, $\rho_b^*(k)= \rho_b(k)/2$ for $k\ge 1$ and,
for $\bfk\in \N_0^s$,
$\rho_\bfb^*(\bfk)=\prod_{i=1}^s \rho_{b_i}^*(k_i)$.

\begin{cor}\label{cor:FCbound}
Let $I$ be an arbitrary $\bfb$-adic interval with resolution $\bfg\in \N_0^s$,
$I= \prod_{i=1}^s [a_i b^{-g_i}, d_i b^{-g_i})$,
with integers $a_i, d_i$, $0\le a_i<d_i\le b_i^{g_i}$, $1\le i\le s$,
and put $f=1_I -\lambda_s(I)$.

Then
\[
\forall\ \bfk \in \Delta_\bfb^*(\bfg): \quad
|\hat{f}(\bfk)| \le \rho_\bfb(\bfk),
\]
and, if $I$ is anchored at the origin,
i.e., if all $a_i$ are equal to 0,
then
\[
\forall\ \bfk \in \Delta_\bfb^*(\bfg): \quad
|\hat{f}(\bfk)| \le \rho_\bfb^*(\bfk).
\]
\end{cor}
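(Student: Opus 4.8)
The plan is to reduce everything to a one-dimensional estimate and then exploit the product structure of both the interval and the function system.

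First I would dispose of the subtracted constant. Since $f=\myone_I-\lambda_s(I)\,\xi_\bfzero$ and, by the corollary to Lemma~\ref{lem:piecewiseconstant}, $\int_{\Is}\overline{\xi_\bfk}\,d\lambda_s=0$ for every $\bfk\neq\bfzero$, the constant term contributes only to the coefficient at $\bfzero$. Hence for every $\bfk\in\Delta_\bfb^*(\bfg)$ (which excludes $\bfzero$) we have $\hat f(\bfk)=\hat{\myone}_I(\bfk)$, and it suffices to bound the latter. Next I would factor the coefficient over the coordinates: writing $I=\prod_{i=1}^s J_i$ with $J_i=[a_ib_i^{-g_i},d_ib_i^{-g_i})$, and using that $\xi_\bfk$ is a product of one-dimensional Walsh or $b$-adic functions while $\lambda_s$ is the product measure, Fubini gives $\hat{\myone}_I(\bfk)=\prod_{i=1}^s \hat{\myone}_{J_i}(k_i)$, where each factor is the one-dimensional Fourier coefficient relative to $\mathcal W_{b_i}$ (for $i\le s_1$) or $\Gamma_{b_i}$ (for $i> s_1$). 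It then remains to prove, in each coordinate, the bound $|\hat{\myone}_{J_i}(k_i)|\le\rho_{b_i}(k_i)$, improved to $\rho_{b_i}^*(k_i)$ when $a_i=0$.

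The heart of the argument is this one-dimensional estimate. If $k_i=0$ then $\hat{\myone}_{J_i}(0)=\lambda_1(J_i)=(d_i-a_i)b_i^{-g_i}\le 1=\rho_{b_i}(0)=\rho_{b_i}^*(0)$. If $k_i\ge 1$, set $t=v_{b_i}(k_i)$, so that $b_i^{t-1}\le k_i<b_i^{t}$ with leading digit $(k_i)_{t-1}\neq 0$, and note $t\le g_i$ since $k_i<b_i^{g_i}$. I would split $J_i$ as the set difference $[0,d_ib_i^{-g_i})\setminus[0,a_ib_i^{-g_i})$, so that $\myone_{J_i}=\myone_{[0,d_ib_i^{-g_i})}-\myone_{[0,a_ib_i^{-g_i})}$ and hence $\hat{\myone}_{J_i}(k_i)$ is the difference of the coefficients of two intervals anchored at the origin. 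To each of these I apply Lemma~\ref{lem:FCestimate} with $\beta=d_ib_i^{-g_i}$ and $\beta=a_ib_i^{-g_i}$, respectively; since the index $g$ appearing in that lemma is exactly $t=v_{b_i}(k_i)$, each coefficient is bounded in absolute value by $(b_i^{t}\sin(\pi(k_i)_{t-1}/b_i))^{-1}$. The triangle inequality then yields $|\hat{\myone}_{J_i}(k_i)|\le 2(b_i^{t}\sin(\pi(k_i)_{t-1}/b_i))^{-1}=\rho_{b_i}(k_i)$, and when $a_i=0$ the second interval is empty and only one term survives, giving the sharper $\rho_{b_i}^*(k_i)$. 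Multiplying the one-dimensional bounds over $i$ then produces $\rho_\bfb(\bfk)$, or $\rho_\bfb^*(\bfk)$ when every $a_i=0$.

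The one genuine obstacle to watch is the boundary behaviour of Lemma~\ref{lem:FCestimate}, whose hypothesis demands $0<\beta<1$. When $a_i=0$ the lower interval degenerates to the empty set, so its coefficient is $0$; when $d_i=b_i^{g_i}$ the upper interval is all of $[0,1)$, whose coefficient vanishes for $k_i\neq 0$ by the integral-zero corollary. In both degenerate cases the displayed bound still holds because the right-hand side is positive. Verifying that these edge cases are covered, and confirming that the index $g$ furnished by Lemma~\ref{lem:FCestimate} coincides with $t=v_{b_i}(k_i)$ so that its leading-digit factor agrees with the one defining $\rho_{b_i}$, is the only bookkeeping that requires care; the remainder is routine.
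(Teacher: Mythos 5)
Your proposal is correct and follows essentially the same route as the paper: factor $\myone_I$ over the coordinates, write each one-dimensional factor as $[0,d_ib_i^{-g_i})\setminus[0,a_ib_i^{-g_i})$, and apply Lemma~\ref{lem:FCestimate} coordinatewise, with the factor $2$ in $\rho_{b_i}$ coming from the triangle inequality and disappearing when $a_i=0$. Your additional bookkeeping (the $k_i=0$ components, the degenerate intervals at $\beta=0$ and $\beta=1$, and the removal of the constant $\lambda_s(I)$ via the integral-zero corollary) is exactly the detail the paper leaves implicit.
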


\begin{proof}
For all $\bfx=(x_1, \ldots, x_s)\in [0,1)^s$ we have
\[
\myone_I(\bfx) = \prod_{i=1}^s \myone_{[a_i b^{-g_i}, d_i b^{-g_i}) } (x_i).
\]
Further,
$[a_i b^{-g_i}, d_i b^{-g_i})= [0, d_i b^{-g_i})\setminus [0, a_i b^{-g_i})$, $1\le i\le s$.
We then apply Lemma \ref{lem:FCestimate} to every coordinate.
This proves the result.
\end{proof}

If $\omega = ({\bf x}_n)_{n\ge 0}$ is
a -possibly finite- sequence in $[0,1)^s$ with at least $N$ elements,
and if $f:\, [0,1)^s$ $\to$ ${\C}$,
we define
\[
S_N(f,\omega) = \frac{1}{N} \sum_{n=0}^{N-1} f({\bf x}_n).
\]

Let $\mathcal{J}$ denote the class of all subintervals of $[0,1)^s$
of the form $\prod_{i=1}^s [u_i,v_i)$, $0\le u_i<v_i\le 1$, $1\le i\le s$,
and let $\mathcal J^*$ denote the subclass of $\mathcal{J}$ of intervals
of the type $\prod_{i=1}^s [0, v_i)$.
The extreme discrepancy and the star discrepancy of a sequence
are defined as follows (see Niederreiter \cite{Nie92a} for further information).

\begin{defn}
Let $\omega = (\mathbf{x}_n)_{n\ge 0}$ be a sequence in $[0,1)^s$.
\begin{enumerate}
\item The {\em (extreme) discrepancy}
$D_N(\omega)$ of the first $N$ elements of $\omega$ is defined as
\[
D_N(\omega) \;=\; \sup_{J\in \mathcal{J}} \left| S_N(\myone_J- \lambda_s(J), \omega)
    \right|.
\]

\item The {\em star discrepancy}
$D_N^*(\omega)$ of the first $N$ elements of $\omega$ is defined as
\[
D_N^*(\omega) \;=\; \sup_{J\in \mathcal{J}^*} \left| S_N(\myone_J - \lambda_s(J), \omega)
    \right|.
\]
\end{enumerate}
\end{defn}

\begin{thm}\label{thm:hybridETK}
Let $s=s_1+s_2$, with $s_1, s_2\in \N_0$, not both equal to 0.
Let $\bfb=(b_1, \ldots, b_s)$ be a vector of $s$ not necessarily distinct integers $b_i\ge 2$,
and let $\mathcal{W}_{\bfb^{(1)}}$ denote the Walsh system in base $\bfb^{(1)}$ and
$\Gamma_{\bfb^{(2)}}$  the $\bfb^{(2)}$-adic system in base $\bfb^{(2)}$,
$\bfb^{(1)}= (b_1, \ldots, b_{s_1})$, $\bfb^{(2)}= (b_{s_1+1}, \ldots, b_s)$.
%Put $\mathcal{F} =
Consider the hybrid function system
$\mathcal{W}_{\mathbf{b}^{(1)}}\otimes \Gamma_{\bfb^{(2)}} = \{ \xi_\bfk: \bfk\in \N_0^s\}$.

Then, for all $\bfg\in\N^s$,
\begin{equation}\label{eqn:ETK}
D_N(\omega) \;\le\; \epsilon_\bfb(\bfg) +
\sum_{\bfk \in \Delta_\bfb^*(\bfg)} \rho_\bfb(\bfk)
\left| S_N(\xi_\bfk, \omega) \right| \ ,
\end{equation}
and
\begin{equation*}\label{eqn:ETK*}
D_N^*(\omega) \;\le\; \epsilon^*_\bfb(\bfg) +
\sum_{\bfk \in \Delta_\bfb^*(\bfg)}  \rho_\bfb^*(\bfk)
\left| S_N(\xi_\bfk, \omega) \right| \ ,
\end{equation*}
where the error terms $\epsilon_\bfb(\bfg)$ and $\epsilon^*_\bfb(\bfg)$ are given by
\[
\epsilon_\bfb(\bfg) = 1 - \prod_{i=1}^s (1- 2 b_i^{-g_i}),
\quad
\epsilon^*_\bfb(\bfg) = 1 - \prod_{i=1}^s (1-  b_i^{-g_i}).
\]
\end{thm}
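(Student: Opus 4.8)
The plan is to follow the classical route to an \etk\ inequality, adapted to the hybrid system: approximate an arbitrary box $J\in\mathcal J$ from inside and outside by two $\bfb$-adic intervals of resolution $\bfg$, control the gap in measure between the two approximants to produce the error term $\epsilon_\bfb(\bfg)$, and expand the indicators of the approximants in $\mathcal W_{\bfb^{(1)}}\otimes\Gamma_{\bfb^{(2)}}$ to produce the Fourier part of the bound.

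First I would fix $J=\prod_{i=1}^s[u_i,v_i)\in\mathcal J$ and, for the given $\bfg\in\N^s$, round each endpoint to the grid $b_i^{-g_i}\Z$. With $a_i^-=\ceil{u_i b_i^{g_i}}$, $d_i^-=\floor{v_i b_i^{g_i}}$ and $a_i^+=\floor{u_i b_i^{g_i}}$, $d_i^+=\ceil{v_i b_i^{g_i}}$ I obtain $\bfb$-adic intervals $I^-=\prod_i[a_i^- b_i^{-g_i},d_i^- b_i^{-g_i})$ and $I^+=\prod_i[a_i^+ b_i^{-g_i},d_i^+ b_i^{-g_i})$ of resolution $\bfg$ with $I^-\subseteq J\subseteq I^+$, where $I^-$ is read as empty if some $d_i^-\le a_i^-$. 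Since $d_i^+-d_i^-\in\{0,1\}$ and $a_i^--a_i^+\in\{0,1\}$, the side lengths $\ell_i^\pm$ of $I^\pm$ satisfy $0\le\ell_i^-\le\ell_i^+\le 1$ and $\ell_i^+-\ell_i^-\le 2b_i^{-g_i}$.

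The one genuinely non-routine point is to upgrade this to $\lambda_s(I^+)-\lambda_s(I^-)\le\epsilon_\bfb(\bfg)$, rather than the weaker $\sum_i 2b_i^{-g_i}$ that a naive telescoping of $\prod_i\ell_i^+-\prod_i\ell_i^-$ yields. For this I would isolate the elementary inequality: if $0\le a_i\le c_i\le 1$ and $c_i-a_i\le\delta_i$ for $1\le i\le s$, then $\prod_{i=1}^s c_i-\prod_{i=1}^s a_i\le 1-\prod_{i=1}^s(1-\delta_i)$. This follows by induction on $s$, or by noting that the left-hand side is nondecreasing in each $c_i$ and nonincreasing in each $a_i$, so that its maximum over the feasible region is attained at the corner $c_i=1$, $a_i=1-\delta_i$, where it equals the right-hand side. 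Taking $c_i=\ell_i^+$, $a_i=\ell_i^-$, $\delta_i=2b_i^{-g_i}$ gives $\lambda_s(I^+)-\lambda_s(I^-)\le\epsilon_\bfb(\bfg)$. In the anchored case $u_i=0$ one has $a_i^\pm=0$, only the upper endpoints move, $\ell_i^+-\ell_i^-\le b_i^{-g_i}$, and the same inequality with $\delta_i=b_i^{-g_i}$ yields $\lambda_s(I^+)-\lambda_s(I^-)\le\epsilon^*_\bfb(\bfg)$.

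For the Fourier part, recall that $\myone_{I^\pm}$ is constant on the resolution-$\bfg$ elint partition of $\Is$ (Remark \ref{s:badic:rem:partition}); the $\prod_{i=1}^s b_i^{g_i}$ functions $\xi_\bfk$ with $\bfk\in\Delta_\bfb(\bfg)$ are also constant on that partition (Lemma \ref{lem:piecewiseconstant}, since $v_\bfb(\bfk)$ is coordinatewise at most $\bfg$) and are orthonormal, hence form a basis of the space of resolution-$\bfg$ step functions. Thus $\myone_{I^\pm}$ equals its finite Fourier expansion pointwise, and subtracting the mean (the $\bfk=\bfzero$ term) gives $\myone_{I^\pm}-\lambda_s(I^\pm)=\sum_{\bfk\in\Delta_\bfb^*(\bfg)}\hat{\myone}_{I^\pm}(\bfk)\,\xi_\bfk$, in accordance with Lemma \ref{lem:pointwiseIdentity}. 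Averaging over $\omega$, taking absolute values, and bounding $|\hat{\myone}_{I^\pm}(\bfk)|\le\rho_\bfb(\bfk)$ by Corollary \ref{cor:FCbound} (respectively $\le\rho_\bfb^*(\bfk)$ when $I^\pm$ is anchored) controls $|S_N(\myone_{I^\pm}-\lambda_s(I^\pm),\omega)|$ by the desired sum. Finally I would sandwich: from $\myone_{I^-}\le\myone_J\le\myone_{I^+}$ and $\lambda_s(I^+)-\lambda_s(J),\ \lambda_s(J)-\lambda_s(I^-)\le\lambda_s(I^+)-\lambda_s(I^-)\le\epsilon_\bfb(\bfg)$ one gets $|S_N(\myone_J-\lambda_s(J),\omega)|\le\epsilon_\bfb(\bfg)+\sum_{\bfk\in\Delta_\bfb^*(\bfg)}\rho_\bfb(\bfk)\,|S_N(\xi_\bfk,\omega)|$. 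Taking the supremum over $J\in\mathcal J$ gives (\ref{eqn:ETK}); repeating with anchored boxes, $\mathcal J^*$, $\epsilon^*_\bfb(\bfg)$, and $\rho_\bfb^*$ gives the starred inequality for $D_N^*(\omega)$.
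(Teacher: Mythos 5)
Your proposal is correct and follows essentially the same route as the paper: inner/outer approximation of $J$ by $\bfb$-adic intervals of resolution $\bfg$ (your explicit floor/ceiling rounding produces exactly the paper's $\underline{J}$ and $\overline{J}$), the product inequality $\prod_i c_i-\prod_i a_i\le 1-\prod_i(1-\delta_i)$ for the measure gap, and the finite pointwise Fourier expansion of the approximants combined with the coefficient bounds of Corollary \ref{cor:FCbound}. One small caution: your ``corner'' justification of the product inequality is not quite right as stated, since the feasible region is not a box (the constraint $c_i\le a_i+\delta_i$ prevents pushing $c_i$ up and $a_i$ down independently, and taking $a_i=0$ would give only $\prod_i\delta_i$), but the induction you also offer --- the same method the paper delegates to \cite[Lemma 3.9]{Nie92a} --- goes through.
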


\begin{proof}
Our first step is a technical result.
Let $t_i, u_i\in [0,1]$, with the property $|t_i-u_i|\le \delta_i$,
where $\delta_i\in [0,1]$,  $1\le i\le s$.
One can show by the same method of proof as  in \cite[Lemma 3.9]{Nie92a}
that
\begin{equation}\label{eqn:inequality}
\left|  \prod_{i=1}^s t_i - \prod_{i=1}^s u_i \right| \le 1 - \prod_{i=1}^s (1- \delta_i).
\end{equation}

Next, consider the following approximation argument,
adapted from the proof of Theorem 3.6 in \cite{Hel09a}.
Suppose that $J$ is an arbitrary subinterval of $[0,1)^s$.
Let $\bfg=(g_1, \ldots, g_s)\in \N^s$ be arbitrarily chosen.
We consider the partition of $[0,1)^s$ given by the family of $\bfb$-adic
elints  $\mathcal{I}_\bfg =\{I_{\bfc, \bfg}: \bfc\in \Delta_\bfb(\bfg)  \}$.
Define
$\underline{J}$ as the union of those elints $I\in \mathcal{I}_\bfg$
that are contained in $J$,
$\underline{J} = \bigcup_{I: I\subseteq J} I$.
Further,
let $ \overline{J}$ denote the union of all elints $I\in \mathcal{I}_\bfg$
with nonempty intersection with $J$,
$ \overline{J} = \bigcup_{I:I \cap J \neq \emptyset} I$.
Then $\underline{J}\subseteq J\subseteq \overline{J}$,
where $\underline{J}$ may be void.
It is elementary to see that
\begin{align*}
\left| S_N(\myone_J - \lambda_s(J), \omega) \right| &\le
    \lambda_s(\overline{J}) - \lambda_s(\underline{J})\\
    &+ \max\left\{
        \left|S_N(\myone_{\underline{J}}-\lambda_s(\underline{J}), \omega)\right|,
        \left|S_N(\myone_{\overline{J}}-\lambda_s(\overline{J}), \omega)\right|
    \right\}.
\end{align*}

In every coordinate $i$,
the sidelength of $\underline{J}$ and $\overline{J}$
differs at most by $2b_i^{-g_i}$.
Hence, by an application of (\ref{eqn:inequality}),
we obtain the bound
\[
\lambda_s(\overline{J}) - \lambda_s(\underline{J}) \le \epsilon_\bfb(\bfg).
\]
The intervals $\underline{J}$ and $\overline{J}$ are both of the form
which was considered in Corollary \ref{cor:FCbound}.
There is only a finite number of such intervals.
Hence,
we obtain the bound
\begin{equation}\label{eqn:upperbound1}
\left| S_N(\myone_J - \lambda_s(J), \omega) \right|\le
 \epsilon_\bfb(\bfg) +
    \max_I \left| S_N(\myone_I - \lambda_s(I), \omega) \right|,
\end{equation}
where the maximum is taken over all intervals $I$ of the form
$I= \prod_{i=1}^s [a_i b^{-g_i}, d_i b^{-g_i})$,
with integers $a_i, d_i$, $0\le a_i<d_i\le b_i^{g_i}$, $1\le i\le s$.

The bound in (\ref{eqn:upperbound1}) is independent of the choice of $J$.
Further, each such $\bfb$-adic interval $I$ is a finite disjoint union of appropriate $\bfb$-adic elints
in $\mathcal{I}_\bfg$.
For this reason,
we may employ Lemma \ref{lem:pointwiseIdentity} and obtain
the following pointwise identity:
\begin{equation*}\label{s:badic:eqn:pointwiseIdentity2}
\forall\bfx\in[0,1)^s: \quad
\myone_{I}(\bfx)
    = \sum_{\bfk \in \Delta_\bfb(\bfg)}
    \hat{\myone}_{I}(\bfk)\xi_\bfk(\bfx).
\end{equation*}
The operator $S_N(\cdot, \omega)$ is linear in the first argument,
which yields
\[
S_N(\myone_I - \lambda_s(I), \omega) = \sum_{\bfk \in \Delta^*_\bfb(\bfg)}
    \hat{\myone}_{I}(\bfk) S_N(\xi_\bfk, \omega).
\]
Corollary \ref{cor:FCbound} implies
\begin{equation*}
D_N(\omega) \le \epsilon_\bfb(\bfg) +
    \sum_{\bfk\in \Delta^*_\bfb(\bfg)} \rho_\bfb(\bfk) \left| S_N(\xi_\bfk, \omega) \right|.
\end{equation*}

In the case of the star discrepancy $D_N^*(\omega)$,
the intervals $J$ are anchored at the origin.
This fact allows us %to use the bound on the Fourier coefficient of Lemma \ref{lem:FCestimate} and
to replace the weight function $\rho_\bfb$ by $\rho^*_\bfb$.
This finishes the proof.
\end{proof}

\begin{rem}
An elementary analytic argument shows that
$\epsilon_\bfb(\bfg)\leq 2s\delta$,
and $\epsilon^*_\bfb(\bfg)\leq s\delta$,
where $\delta=\max_{1\le i\le s} b_i^{-g_i}$.
\end{rem}

\begin{cor}\label{cor:ETK}
Let $\omega$, $\bfb$, and $\bfg$ be as in Theorem \ref{thm:hybridETK}.
Suppose that $B$ is a global bound for the exponential sums $S_N(\xi_\bfk, \omega)$
for all $\bfk$ in the finite domain $\Delta^*_\bfb(\bfg)$,
\[
\forall \bfk\in \Delta^*_\bfb(\bfg): \quad \left| S_N(\xi_\bfk, \omega) \right| \le B.
\]
Then
\begin{align*}
D_N(\omega )   \;&\le\; \epsilon_\bfb(\bfg) + B\cdot \prod_{i=1}^s (2.43 \;g_i\ln b_i +1),\\
D_N^*(\omega ) \;&\le\; \epsilon^*_\bfb(\bfg) + B\cdot \prod_{i=1}^s (1.22 \;g_i\ln b_i +1).
\end{align*}
\end{cor}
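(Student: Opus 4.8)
The plan is to feed the global bound $B$ into the \etk\ inequality of Theorem~\ref{thm:hybridETK} and then evaluate the resulting weight sum in closed form. Since $\left|S_N(\xi_\bfk,\omega)\right|\le B$ for every $\bfk\in\Delta_\bfb^*(\bfg)$, the first inequality of the theorem immediately gives
\[
D_N(\omega)\;\le\;\epsilon_\bfb(\bfg)+B\sum_{\bfk\in\Delta_\bfb^*(\bfg)}\rho_\bfb(\bfk).
\]
Because $\rho_\bfb(\bfk)=\sprod\rho_{b_i}(k_i)$ is a product and the index set $\Delta_\bfb(\bfg)$ is the cartesian product of the ranges $0\le k_i<b_i^{g_i}$, adjoining the omitted term $\bfk=\bfzero$ (which only enlarges the nonnegative sum) factorises the weight sum:
\[
\sum_{\bfk\in\Delta_\bfb^*(\bfg)}\rho_\bfb(\bfk)\;\le\;\sprod\Bigl(\sum_{k=0}^{b_i^{g_i}-1}\rho_{b_i}(k)\Bigr).
\]
Thus the corollary reduces to the one-dimensional estimate $\sum_{k=0}^{b^g-1}\rho_b(k)\le 2.43\,g\ln b+1$, and its star analogue with $\rho_b^*$ and the constant $1.22$.

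For the one-dimensional sum I would group the integers $0\le k<b^g$ by their number of base-$b$ digits. For $b^{t-1}\le k<b^t$ the weight $\rho_b(k)=2/(b^t\sin(\pi k_{t-1}/b))$ depends, within this block, only on the leading digit $k_{t-1}\in\{1,\dots,b-1\}$, and exactly $b^{t-1}$ integers share a given leading digit. Hence the contribution of each digit-length $t$ collapses to $\tfrac2b\sum_{a=1}^{b-1}1/\sin(\pi a/b)$, \emph{independently} of $t$, and summing over $t=1,\dots,g$ together with the term $\rho_b(0)=1$ yields the exact identity
\[
\sum_{k=0}^{b^g-1}\rho_b(k)\;=\;1+g\cdot\frac2b\sum_{a=1}^{b-1}\frac1{\sin(\pi a/b)};
\]
replacing $\rho_b$ by $\rho_b^*$ merely halves the coefficient of $g$.

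It remains to bound the trigonometric sum $\sigma(b)=\sum_{a=1}^{b-1}1/\sin(\pi a/b)$, and this is the heart of the matter. Using the symmetry $\sin(\pi a/b)=\sin(\pi(b-a)/b)$ to fold the sum onto the range $1\le a\le b/2$ and Jordan's inequality $\sin(\pi a/b)\ge 2a/b$ there, one gets $\sigma(b)\le b\sum_{a=1}^{\floor{b/2}}1/a$, so that $\tfrac1b\sigma(b)$ is dominated by a harmonic number of size $\ln(b/2)+O(1)$. The delicate point is that the asymptotics $\tfrac1b\sigma(b)\sim\tfrac2\pi\ln b$ leave ample room against $1.215\ln b$ for large $b$, while the additive harmonic correction is largest for the smallest bases; I would therefore certify the two explicit constants by the harmonic bound for $b\ge 3$ and by direct evaluation for $b=2$, where $\sigma(2)=1$ makes both inequalities trivial. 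Combining this with the factorisation gives $\sum_{k=0}^{b_i^{g_i}-1}\rho_{b_i}(k)\le 2.43\,g_i\ln b_i+1$ for each $i$, and multiplying over $i$ produces the stated bound on $D_N(\omega)$. The star case is entirely parallel: start from the second inequality of Theorem~\ref{thm:hybridETK}, use $\rho_b^*$ (hence the coefficient $\tfrac1b\sigma(b)$ and the constant $1.22$), and replace $\epsilon_\bfb(\bfg)$ by $\epsilon_\bfb^*(\bfg)$.
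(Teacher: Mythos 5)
Your proof follows the paper's own argument step for step: insert the uniform bound $B$ into Theorem~\ref{thm:hybridETK}, factorise the weight sum over $\Delta_\bfb(\bfg)$ into one-dimensional sums, and evaluate each of these exactly as $1+2g_i\,C(b_i)$ (resp.\ $1+g_i\,C(b_i)$ for the star weights), where $C(b)=\frac1b\sum_{a=1}^{b-1}1/\sin(\pi a/b)$. The only divergence is the final numerical step: the paper quotes Niederreiter's inequality $C(b)<(2/\pi)\ln b+2/5$, whereas you bound $C(b)$ by the harmonic number $H_{\lfloor b/2\rfloor}$ via the symmetry $\sin(\pi a/b)=\sin(\pi(b-a)/b)$ and Jordan's inequality. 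Your bound is asymptotically weaker ($\sim\ln b$ instead of $\sim(2/\pi)\ln b$), but it does stay below $1.215\ln b$ for all $b\ge 3$ (the worst ratio $H_{\lfloor b/2\rfloor}/\ln b\approx 1.08$ occurs at $b=4$ and the ratio tends to $1$), and $b=2$ is handled directly, so both constants $2.43$ and $1.22$ are certified. The argument is correct, and its advantage over the paper's is that it is self-contained rather than resting on a citation; the price is that one must verify the finitely many small bases explicitly, since the harmonic estimate is not monotone in the relevant sense.
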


\begin{proof}
Let us first consider the extreme discrepancy $D_N(\omega)$.
The discrepancy bound
(\ref{eqn:ETK}) implies that we only have to estimate the sum of weights
\[
\sum_{\bfk\in\Delta^*_\bfb(g)} \rho_\bfb(\bfk) = \sum_{\bfk\in\Delta_\bfb(\bfg)} \rho_\bfb(\bfk) - 1.
\]
Because of the identities
\[
\sum_{\bfk\in\Delta_\bfb(\bfg)} \rho_\bfb(\bfk) =
    \prod_{i=1}^s  \sum_{k_i=0}^{b_i^{g_i}-1} \rho_{b_i} (k_i),
\]
and
\[
\sum_{k=0}^{b^g-1} \rho_b(k) = 1+ \sum_{t=1}^g \sum_{a=1}^{b-1}
    \sum_{k=ab^{t-1}}^{(a+1)b^{t-1}-1} \rho_b(k),
    \]
we obtain
\[
\sum_{\bfk\in\Delta_\bfb(\bfg)} \rho_\bfb(\bfk) =
    \prod_{i=1}^s (1 + 2g_i\cdot C(b_i)),
\]
where $C(b)= 1/b \sum_{a=1}^{b-1} 1/\sin(\pi a/b)$.
From Niederreiter \cite[p. 574, inequality (5)]{Nie76a} it follows that
$C(b) < (2/\pi) \ln b + 2/5$.
An elementary calculation gives the result.

The case of $D_N^*(\omega)$ is completely analogous.
\end{proof}

\begin{rem}
Corollary \ref{cor:ETK} generalizes  \cite[Corollary 4]{Hel93a} and \cite[Corollary 3.7]{Hel09a}.
\end{rem}

\section{Adding digit vectors}\label{s:addition}
% --------------------------
Any construction method for finite or infinite sequences of points
is based on arithmetical operations like addition or multiplication,
on a suitable domain.
It is most helpful if the algebraic structure underlying these operations is an abelian
group.
The choice of this group determines which function systems will be suitable
for the analysis of a given sequence,
because the construction method is intrinsically related to
function systems,
via the concept of the dual group (see Hewitt and Ross~\cite{Hewitt79a}).
Different types of
sequences require different types of function systems for their analysis.

It has been shown in \cite{Hel12c} that,
when we deal with digit vectors,
there exist {\em only two basic types} of addition of digit vectors:
addition without carry,
and addition with carry.
Further,
while there are only two types of addition
for a given length $m$ of the digit vectors,
there are at least $2^{m-1}$ different additions for such vectors.
They are obtained by mixing the two basic types.
This  number may be increased even further
if one employs  automorphisms of suitable groups of residues.
We refer the reader to \cite{Hel12c} for details.

Addition without carry is associated with Walsh functions and addition with carry
with the $b$-adic function system,
in the sense mentioned above, by considering  dual groups.
For this reason,
we have chosen a hybrid function system
composed of Walsh and $b$-adic functions for
the version of the \etk \  inequality that has been exhibited in Theorem \ref{thm:hybridETK},
in order to accomodate hybrid digital sequences.

\begin{rem}
It becomes apparent from the proofs above that our results can easily be extended to rather
`wild' additions of digit vectors, as they were considered in \cite{Hel12c}.
One only has to adapt the hybrid function system %$\mathcal{F}$
in a way that it corresponds
to the chosen additions.
\end{rem}

%\subsection*{Acknowledgements}
%The author would like to thank Harald Niederreiter, University of Salzburg, and RICAM,
%Austrian Academy of Sciences, Linz,
%for several helpful comments.

% ---------------------------------
%% Use the widest label as parameter.

%% In IMPAN journals, only the title is italicized; boldface is not used.
%% The issue number is only given when the issues are paginated separately.

%%%%%%%%%%% To ease editing, use normal size:

\normalsize
\baselineskip=17pt

%%%%%%%%%%%%%

% ---------------------------------
%\bibliographystyle{plain}
%\bibliography{../../literature/pLab,../../literature/crypt}

\begin{thebibliography}{10}

\bibitem{Dick10a}
J.~Dick and F.~Pillichshammer.
\newblock {\em Digital Nets and Sequences: Discrepancy Theory and Quasi-Monte
  Carlo Integration}.
\newblock Cambridge University Press, Cambridge, 2010.

\bibitem{Hel93a}
P.~Hellekalek.
\newblock General discrepancy estimates: the {W}alsh function system.
\newblock {\em Acta Arith.}, {\bf 67}:209--218, 1994.

\bibitem{Hel98a}
P.~Hellekalek.
\newblock On the assessment of random and quasi-random point sets.
\newblock In P.~Hellekalek and G.~Larcher, editors, {\em Random and
  Quasi-Random Point Sets}, volume 138 of {\em Lecture Notes in Statistics},
  pages 49--108. Springer, New York, 1998.

\bibitem{Hel09a}
P.~Hellekalek.
\newblock A general discrepancy estimate based on $p$-adic arithmetics.
\newblock {\em Acta Arith.}, {\bf 139}:117--129, 2009.

\bibitem{Hel10a}
P.~Hellekalek.
\newblock A notion of diaphony based on $p$-adic arithmetic.
\newblock {\em Acta Arith.}, {\bf 145}:273--284, 2010.

\bibitem{Hel12c}
P.~Hellekalek.
\newblock Adding digit vectors.
\newblock \url{http://arxiv.org/abs/1209.3585}, 2012.

\bibitem{Hel10c}
P.~Hellekalek.
\newblock Hybrid function systems in the theory of uniform distribution of
  sequences.
\newblock In L.~Plaskota and H.~Wozniakowski, editors, {\em Monte {C}arlo and
  {Q}uasi-{M}onte {C}arlo {M}ethods in {S}cientific {C}omputing (Warsaw,
  2010)}, Lecture Notes in Statist., New York, 2012. Springer.
\newblock To appear.

\bibitem{Hel11a}
P.~Hellekalek and H.~Niederreiter.
\newblock Constructions of uniformly distributed sequences using the $b$-adic
  method.
\newblock {\em Unif. Distrib. Theory}, {\bf 6}(1):185--200, 2011.

\bibitem{Hewitt79a}
E.~Hewitt and K.~A. Ross.
\newblock {\em Abstract {H}armonic {A}nalysis. {V}ol. {I}}, volume 115 of {\em
  Grundlehren der Mathematischen Wissenschaften [Fundamental Principles of
  Mathematical Sciences]}.
\newblock Springer-Verlag, Berlin, second edition, 1979.

\bibitem{Mahler81a}
K.~Mahler.
\newblock {\em {$p$}-adic {N}umbers and {T}heir {F}unctions}, volume~76 of {\em
  Cambridge Tracts in Mathematics}.
\newblock Cambridge University Press, Cambridge, second edition, 1981.

\bibitem{Nie76a}
H.~Niederreiter.
\newblock On the distribution of pseudo-random numbers generated by the linear
  congruential method. {III}.
\newblock {\em Math. Comp.}, {\bf 30}:571--597, 1976.

\bibitem{Nie92a}
H.~Niederreiter.
\newblock {\em Random {N}umber {G}eneration and {Q}uasi-{M}onte {C}arlo
  {M}ethods}.
\newblock SIAM, Philadelphia, 1992.

\bibitem{Nie09a}
H.~Niederreiter.
\newblock On the discrepancy of some hybrid sequences.
\newblock {\em Acta Arith.}, {\bf 138}:373--398, 2009.

\bibitem{Nie10b}
H.~Niederreiter.
\newblock A discrepancy bound for hybrid sequences involving digital explicit
  inversive pseudorandom numbers.
\newblock {\em Unif. Distrib. Theory}, {\bf 5}(1):53--63, 2010.

\bibitem{Nie10a}
H.~Niederreiter.
\newblock Further discrepancy bounds and an {E}rd{\"o}s-{T}ur{\'a}n-{K}oksma
  inequality for hybrid sequences.
\newblock {\em Monatsh. Math.}, {\bf 161}:193--222, 2010.

\bibitem{Nie11b}
H.~Niederreiter.
\newblock Discrepancy bounds for hybrid sequences involving matrix-method
  pseudorandom vectors.
\newblock {\em Publ. Math. Debrecen}, {\bf 79}(3-4):589--603, 2011.

\bibitem{Nie11a}
H.~Niederreiter and A.~Winterhof.
\newblock Discrepancy bounds for hybrid sequences involving digital explicit
  inversive pseudorandom numbers.
\newblock {\em Unif. Distrib. Theory}, {\bf 6}(1):33--56, 2011.

\bibitem{Sch90a}
F.~Schipp, W.R. Wade, and P.~Simon.
\newblock {\em Walsh {S}eries. {A}n {I}ntroduction to {D}yadic {H}armonic
  {A}nalysis. With the collaboration of J. {P}{\'a}l.}
\newblock Adam Hilger, Bristol and New York, 1990.

\end{thebibliography}
%\def\cdprime{$''$}
\def\cdprime{$''$} \def\cdprime{$''$} \def\cdprime{$''$}

\end{document}